\title[The Steenrod algebra: self-injective, and not self-injective.]{The Steenrod algebra is self-injective, and the Steenrod algebra is not self-injective.}
\author{A. Salch}
\begin{document}
\begin{abstract}
It is well-known that the Steenrod algebra $A$ is self-injective as a graded ring. We make the observation that simply changing the grading on $A$ can make it cease to be self-injective. We see also that $A$ is {\em not} self-injective as an {\em ungraded} ring. 

These observations follow from the failure of certain coproducts of injective $A$-modules to be injective. Hence it is natural to ask: which coproducts of graded-injective modules, over a general graded ring, remain graded-injective? We give a complete solution to that question by proving a graded generalization of Carl Faith's characterization of $\Sigma$-injective modules. Specializing again to the Steenrod algebra, we use our graded generalization of Faith's theorem to prove that the covariant embedding of graded $A_*$-comodules into graded $A$-modules preserves injectivity of bounded-above objects, but does not preserve injectivity in general.
\end{abstract}
\maketitle

\section{Self-injectivity of the Steenrod algebra.}

The following is a well-known theorem, originally due to Adams and Margolis \cite{MR294450} at the prime $p=2$, and Moore and Peterson \cite{MR335572} at odd primes $p$:
\begin{theorem}\label{A is self-inj}
  The mod $p$ Steenrod algebra is self-injective. More precisely: the mod $p$ Steenrod algebra $A$, regarded as a free {\em graded} left $A$-module, is injective in the category of {\em graded} left $A$-modules.
\end{theorem}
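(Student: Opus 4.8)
The plan is to verify injectivity of $A$ through the graded version of Baer's criterion, reducing the global extension problem to a family of extension problems over finite-dimensional sub-Hopf-algebras, each of which is automatically solvable because finite-dimensional Hopf algebras are self-injective. Concretely, a homogeneous left $A$-linear map $A \to A$ of degree $k$ is exactly right multiplication by its value on $1$, so the graded Baer criterion says that $A$ is injective precisely when, for every graded left ideal $I \subseteq A$ and every homogeneous left $A$-module map $f \colon I \to A$ of degree $k$, there is a single element $a \in A_k$ with $f(x) = xa$ for all $x \in I$. I would spend no effort on the reduction to Baer's criterion itself, which is standard in the graded setting.

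First I would import two structural inputs. Write $A = \bigcup_n B_n$ as the union of an increasing chain of finite-dimensional sub-Hopf-algebras $B_n$, for instance the subalgebras $A(n)$. By the Milnor--Moore theorem, $A$ is free as a graded left $B_n$-module; by the Larson--Sweedler theorem, each finite-dimensional $B_n$ is a Frobenius algebra and hence self-injective. Since $B_n$ is finite-dimensional it is graded-Artinian, so arbitrary direct sums of injective graded $B_n$-modules remain injective (Bass--Papp); as $A$ is a direct sum of shifts of $B_n$, this shows $A$ is injective as a graded $B_n$-module. Given $f \colon I \to A$ as above, restriction to $B_n$ presents $I \cap B_n$ as a graded left ideal of $B_n$ and $f|_{I \cap B_n}$ as a $B_n$-linear map into the injective $B_n$-module $A$; extending and evaluating at $1$ produces, for each $n$, an element $a_n \in A_k$ with $f(x) = x a_n$ for all $x \in I \cap B_n$.

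The hard part is that the locally chosen elements $a_n$ need not agree, so there is no a priori reason for a single global $a$ to exist; this is exactly the step that I expect to fail once the grading is altered or removed. Here finite type should do all the work. For each $n$ the solution set $S_n = \{\, a \in A_k : xa = f(x) \text{ for all } x \in I \cap B_n \,\}$ is a nonempty affine subspace of the vector space $A_k$, and since $I \cap B_n$ increases with $n$, the $S_n$ form a nested decreasing chain of nonempty affine subspaces. Because $A$ has finite type, $A_k$ is finite-dimensional, so the underlying directions of the $S_n$ form a decreasing chain of subspaces that must stabilize; a nonempty affine subspace contained in another of the same dimension equals it, whence the chain $S_n$ stabilizes and $\bigcap_n S_n \neq \emptyset$. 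Any $a$ in this intersection satisfies $f(x) = xa$ on $\bigcup_n (I \cap B_n) = I$, extending $f$ and verifying Baer's criterion. I would close by observing that the only essential use of the grading is the finite-dimensionality of $A_k$, foreshadowing the sensitivity of the conclusion to the choice of grading discussed in the rest of the paper.
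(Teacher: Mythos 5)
Your proof is correct. Note, though, that the paper does not actually prove Theorem \ref{A is self-inj}: it is quoted as a known result, with citations to Adams--Margolis, Moore--Peterson, and Margolis's book, so there is no in-paper argument to compare against. What you have written is essentially a self-contained reconstruction of the standard argument underlying those references (Margolis's $P$-algebra machinery): the graded Baer criterion, Milnor--Moore freeness of $A$ over each finite sub-Hopf-algebra $A(n)$, the Larson--Sweedler/Frobenius property giving gr-self-injectivity of each $A(n)$, the graded Bass--Papp theorem over the gr-Noetherian ring $A(n)$ to see that $A$ is gr-injective as an $A(n)$-module, and then a compactness argument to glue the local solutions $a_n$. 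Your gluing step --- a descending chain of nonempty affine subspaces of the finite-dimensional space $A_k$ must stabilize, since the chain of direction subspaces stabilizes and a nonempty affine subspace contained in another with the same direction equals it --- is a clean and correct way to package that compactness. One remark on scope: because your final step leans on finite-dimensionality of $A_k$, the argument as written proves the theorem for $A$ itself (and more generally for bounded-below free modules of finite type), but it does not immediately yield the stronger statement quoted just after the theorem, that \emph{every} bounded-below free graded $A$-module is gr-injective; for $\coprod_{s\in S}\Sigma^{d(s)}A$ with $S$ infinite the graded pieces are infinite-dimensional and one needs a slightly more careful compactness argument. For the statement actually at issue, your proof is complete.
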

More generally, any bounded-below free graded left $A$-module is injective in the category of graded left $A$-modules. See Theorem 12 in section 13.3 of \cite{MR738973} for this result, as well as its converse: a bounded-below graded left $A$-module is injective if and only if it is free.

However, using some old results in ring theory, it is also easy to prove the following:
\begin{theorem}\label{A is not self-inj}
The Steenrod algebra is {\em not} self-injective. More precisely: the Steenrod algebra $A$, as a free left $A$-module, is {\em not} injective in the category of {\em ungraded} left $A$-modules.
\end{theorem}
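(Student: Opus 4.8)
The plan is to deduce the ungraded statement from a failure of graded injectivity for a coproduct of graded-injective modules, transported through the forgetful functor. Write $U\colon A\text{-gr}\to A\text{-Mod}$ for the functor that forgets the grading. It is exact and admits a right adjoint $F$; a short computation identifies $F$ on an ungraded module $N$ as the graded module whose degree-$k$ component is $N$ for every $k$, with $a\in A_d$ acting through its given action on $N$. Taking $N={}_AA$ to be the rank-one free module, one obtains a natural isomorphism $F({}_AA)\cong\bigoplus_{k\in\mathbb Z}A[k]$, the coproduct of all integer shifts of $A$. Because $U$ is exact, its right adjoint $F$ sends injective objects of $A\text{-Mod}$ to graded-injective objects of $A\text{-gr}$. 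Hence, \emph{if} $A$ were injective as an ungraded left $A$-module, then $\bigoplus_{k\in\mathbb Z}A[k]$ would be graded-injective. Each $A[k]$ is a bounded-below free graded module, hence graded-injective by the result quoted after Theorem~\ref{A is self-inj}, so the whole strategy rests on exhibiting an \emph{infinite coproduct of graded-injective modules that is not graded-injective} — precisely the phenomenon flagged in the abstract. The theorem then follows by contraposition once I show $\bigoplus_{k\in\mathbb Z}A[k]$ is not graded-injective.

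To prove that last statement I would apply the graded Baer criterion to the augmentation ideal $\overline A=\bigoplus_{n>0}A_n$. A graded homomorphism $f\colon\overline A\to\bigoplus_k A[k]$ is the same datum as a family $(f_k)_k$ of graded maps $f_k\colon\overline A\to A$, where $f_k$ raises internal degree by $k$, subject only to the constraint that for each fixed $a$ one has $f_k(a)=0$ for all but finitely many $k$. An extension of $f$ to all of $A$ amounts to a single element $\theta=(\theta_k)_k\in\bigoplus_k A_k$ of \emph{finite} support with $f_k(a)=a\theta_k$. Thus the obstruction to extending $f$ is exactly the gap between ``only finitely many $f_k$ move each element'' and ``only finitely many $f_k$ are nonzero in total.'' I will therefore take $\theta_k\in A_k$, realize $f_k$ as right multiplication by $\theta_k$, and arrange that infinitely many $\theta_k$ are nonzero while every fixed element of $\overline A$ is moved by only finitely many of them.

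The key structural inputs are that $A$ is the union of its finite sub-Hopf-algebras $A(n)$, over each of which $A$ is free (Milnor--Moore), and that each $A(n)$ is a finite-dimensional Poincar\'e-duality algebra. From these it follows that, for every $n$, the condition $\overline{A(n)}\cdot\theta=0$ is met by nonzero homogeneous $\theta$ in arbitrarily high degrees (such $\theta$ lie in the top-degree socle of $A(n)$ times a complementary free factor), and that the total right annihilator $\{\theta:\overline A\cdot\theta=0\}$ vanishes. Choosing $n(k)\to\infty$ and, for infinitely many $k$, a nonzero $\theta_k\in A_k$ with $\overline{A(n(k))}\,\theta_k=0$ (and $\theta_k=0$ otherwise) produces a family $(f_k)$ with the required properties: each generator $\mathrm{Sq}^{2^i}$ of $\overline A$ kills $\theta_k$ as soon as $n(k)\ge i$, yielding the per-element finiteness, whereas no finite-support $\theta$ can reproduce $f$ because $\overline A$ has trivial right annihilator. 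I expect the main obstacle to be exactly this construction — producing nonzero $\theta_k$ annihilated by ever-larger initial segments of $\overline A$ — for which the Poincar\'e-duality structure of the $A(n)$ is the decisive tool.
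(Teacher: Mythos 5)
Your proposal is correct, but it takes a genuinely different route from the paper's proof at both of its two stages. For the reduction from the ungraded to the graded statement, the paper argues forward through the forgetful functor: it invokes Megibben's theorem (every countable injective module is $\Sigma$-injective) to conclude that ungraded self-injectivity of $A$ would force $\coprod_{n\in\mathbb{Z}}A$ to be injective, and then uses the classical fact that a graded module whose underlying ungraded module is injective is gr-injective. You instead pass through the \emph{right adjoint} $F$ of the forgetful functor $U$, observe $F(A)\cong\coprod_{k\in\mathbb{Z}}\Sigma^k A$, and use that a right adjoint of an exact functor preserves injectives; this is cleaner and avoids Megibben's theorem entirely, which is the least elementary input in the paper's argument (though Megibben's theorem fits the paper's larger theme of $\Sigma$-injectivity). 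For the second stage, the paper simply cites Margolis (Proposition 10 in section 13.2 of \cite{MR738973}) for the fact that a free graded $A$-module that is not bounded below cannot be gr-injective, whereas you reprove the needed special case directly via the graded Baer criterion applied to the augmentation ideal $\overline{A}$, using the Milnor--Moore freeness of $A$ over the $A(n)$ and Poincar\'e duality of $A(n)$ to produce socle elements $\theta_k$ of unbounded degree with $\overline{A(n(k))}\theta_k=0$, together with the triviality of the right annihilator of $\overline{A}$. That argument is sound; two small points to tighten when writing it up: the ``per-element finiteness'' must hold for \emph{every} element of $\overline{A}$, not just the generators $\Sq^{2^i}$ (this is immediate from $\overline{A}=\bigcup_n\overline{A(n)}$, which you already invoke), and degree bookkeeping forces $\theta_k$ to live in internal degree $-k$ (with your suspension convention), so the nonzero $\theta_k$ occur only as $k\to-\infty$ --- consistent with the failure of gr-injectivity being caused precisely by the suspensions being unbounded below.
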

\begin{proof}
Recall that an injective module is said to be {\em $\Sigma$-injective} if every coproduct of copies of that module is also injective. The main theorem of Megibben's paper \cite{MR633266} establishes that all countable injective modules, over any ring, are $\Sigma$-injective. If $A$ were injective in the category of (ungraded) left $A$-modules, then by Megibben's theorem, the coproduct $\coprod_{n\in\mathbb{Z}} A$ would also be an injective left $A$-module. The coproduct $\coprod_{n\in\mathbb{Z}} A$ is the underlying ungraded left $A$-module of the graded left $A$-module $\coprod_{n\in\mathbb{Z}} \Sigma^n A$, which is known to {\em not} be injective in the category of graded $A$-modules: this is a special case of 
Proposition 10 in section 13.2 of Margolis's book \cite{MR738973}, which establishes that a free graded left $A$-module cannot be injective in the graded module category unless it is bounded below.

Finally, the functor $U$ from the graded module category to the ungraded module category has the property that, if $UM$ is injective, then $M$ is also injective. This is classical, and holds for any nonnegatively-graded ring: see for example Corollary 3.3.10 of \cite{MR551625}. The argument is now complete: if $A$ were self-injective in the ungraded module category, then the ungraded direct sum $\coprod_{n\in\mathbb{Z}} A$ would also be injective, by Megibben's theorem; hence $\coprod_{n\in\mathbb{Z}} \Sigma^n A$ would be injective in the graded module category, contradicting Margolis's result.
\end{proof}
Despite this paper's tongue-in-cheek title, there is of course no contradiction  implied by Theorems \ref{A is self-inj} and \ref{A is not self-inj} being both true: it is entirely consistent for a graded ring to be self-injective in the graded sense, but not self-injective in the ungraded sense. 

\begin{corollary}
Changing the grading on the Steenrod algebra, or forgetting the grading altogether, can change whether or not the Steenrod algebra is self-injective.
\end{corollary}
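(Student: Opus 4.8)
The plan is to deduce the corollary directly from Theorems \ref{A is self-inj} and \ref{A is not self-inj}, together with one additional regrading argument. The clause about \emph{forgetting} the grading is immediate: Theorem \ref{A is self-inj} shows that $A$ is injective in the category of graded left $A$-modules, while Theorem \ref{A is not self-inj} shows that the same ring, with its grading discarded, fails to be injective in the category of ungraded left $A$-modules. Passing from the graded to the ungraded module category therefore converts a self-injective ring into a non-self-injective one, which is exactly what the second half of the statement asserts.

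For the clause about \emph{changing} the grading, the plan is to exhibit a genuinely different, but still nontrivial, grading on the underlying ring of $A$ under which self-injectivity fails, and the natural candidate is the coarsening of the standard $\mathbb{Z}$-grading along the quotient map $\mathbb{Z}\to\mathbb{Z}/k$ for a fixed integer $k\geq 2$. Writing $\bar{A}$ for $A$ equipped with this $\mathbb{Z}/k$-grading, I would mimic the proof of Theorem \ref{A is not self-inj} step for step. Because $\mathbb{Z}/k$ is finite, the category of $\mathbb{Z}/k$-graded $\bar{A}$-modules is equivalent to the category of ungraded modules over a smash-product ring $\Lambda=\bar{A}\#\mathbb{F}_p[\mathbb{Z}/k]^*$; since $A$ has a countable $\mathbb{F}_p$-basis of admissible monomials, $\Lambda$ is a countable ring and $\bar{A}$ corresponds to a countable $\Lambda$-module. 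If $\bar{A}$ were self-injective in the $\mathbb{Z}/k$-graded sense, then Megibben's theorem \cite{MR633266} would force $\coprod_{n\in\mathbb{Z}}\bar{A}$ to be injective in the $\mathbb{Z}/k$-graded category.

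From here the plan is to push this injectivity down to the ungraded category and then back up to the $\mathbb{Z}$-graded category in order to contradict Margolis. The forgetful functor from $\mathbb{Z}/k$-graded $\bar{A}$-modules to ungraded $A$-modules is restriction along the inclusion $A\hookrightarrow\Lambda$; as $\Lambda$ is free of finite rank over $A$, this restriction has an exact left adjoint and hence preserves injectivity. Applying it to $\coprod_{n\in\mathbb{Z}}\bar{A}$ yields that $\coprod_{n\in\mathbb{Z}}A$ is injective as an ungraded $A$-module, which is the underlying module of $\coprod_{n\in\mathbb{Z}}\Sigma^n A$; by Corollary 3.3.10 of \cite{MR551625} the latter would then be injective in the $\mathbb{Z}$-graded category, contradicting Proposition 10 of section 13.2 of \cite{MR738973}. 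Thus $\bar{A}$ is not self-injective, and the standard $\mathbb{Z}$-grading and its mod-$k$ coarsening differ on self-injectivity, proving the first half of the statement.

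The step I expect to be the main obstacle is the verification that the forgetful functor for the \emph{finite} grading group $\mathbb{Z}/k$ genuinely preserves injectivity, equivalently that the smash product $\Lambda$ is flat, indeed free, over $A$ on the appropriate side, so that the relevant induction functor is exact. This is where the finiteness of $\mathbb{Z}/k$ is essential, and it is what allows the $k=1$ specialization to recover Theorem \ref{A is not self-inj} and the $k\geq 2$ cases to handle the change-of-grading assertion by a single uniform argument. Once that functorial input is in place, the remaining steps are the bookkeeping of coproducts (finitely many shift-classes modulo $k$, each a countable coproduct of shifts of $\bar{A}$) together with the two classical preservation and detection results already cited.
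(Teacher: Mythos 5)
Your handling of the ``forgetting the grading'' clause is exactly the paper's: an immediate juxtaposition of Theorems \ref{A is self-inj} and \ref{A is not self-inj}. For the ``changing the grading'' clause, however, you and the paper genuinely diverge. The paper's regrading is the trivial one: put all of $A$ in degree $0$. For a ring concentrated in degree $0$, a graded module $M$ is just a $\mathbb{Z}$-indexed family of ungraded modules, $\hom_{\mathrm{gr}}(M,A)=\hom_A(M^0,A)$, and $M\mapsto M^0$ is exact; so gr-injectivity of the regraded $A$ is literally equivalent to ungraded injectivity of $A$, and the clause is a one-line consequence of Theorem \ref{A is not self-inj}. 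You instead coarsen to a $\mathbb{Z}/k$-grading and rerun the whole Megibben--Margolis argument in that category, using Cohen--Montgomery duality to identify $\mathbb{Z}/k$-graded $\bar{A}$-modules with modules over the smash product $\Lambda$, and the freeness of $\Lambda$ over $A$ to see that the forgetful functor to ungraded $A$-modules preserves injectives. I believe this is correct as sketched: the two inputs you flag do hold, since $\bar{A}$ is countable as a $\Lambda$-module and $\Lambda$ is free of rank $k$ as a right $A$-module, so $\Lambda\otimes_A-$ is exact and restriction preserves injectives; the ``finitely many shift-classes'' bookkeeping is also harmless, and in fact unnecessary, since the unshifted coproduct $\coprod_{n\in\mathbb{Z}}\bar{A}$ already has underlying ungraded module $\coprod_{n\in\mathbb{Z}}A$. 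What your route buys is a strictly stronger conclusion---even a nontrivial regrading by a finite quotient of $\mathbb{Z}$ destroys self-injectivity---at the cost of machinery the corollary does not need, plus a mild mismatch with the statement: a $\mathbb{Z}/k$-grading changes the grading group, so ``the resulting graded module category'' is no longer a $\mathbb{Z}$-graded module category of the kind considered everywhere else in the paper, whereas the paper's degree-$0$ regrading stays within $\mathbb{Z}$-gradings and reduces the whole corollary to a single observation.
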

\begin{proof}
We have just shown that forgetting the grading on the Steenrod algebra results in a ring which is not self-injective. As a consequence, if we change the grading on $A$ by putting all its elements in degree $0$, then the free $A$-module $A$ is not injective in the resulting graded module category.
\end{proof}

\section{A graded version of Faith's criterion for $\Sigma$-injectivity.}
In the proof of Theorem \ref{A is not self-inj}, it was useful to consider whether a coproduct of copies of an injective (graded or ungraded) module remains injective. An injective module $M$ is called {\em $\Sigma$-injective} (respectively, {\em countably $\Sigma$-injective}) if the direct sum $\coprod_{s\in S}M$ is injective for all sets $S$ (respectively, all countable sets $S$). The universal property of injective modules ensures that injectivity is preserved under {\em products,} but it does not directly imply anything about injectivity of {\em coproducts,} i.e., direct sums. The classical Bass-Papp theorem\footnote{See Theorem 3.46 of \cite{MR1653294} for a textbook treatment.} states that, if $R$ is a ring, then $R$ is left Noetherian if and only if every coproduct of injective left $R$-modules is injective. So over a left Noetherian ring, all injective left modules are $\Sigma$-injective.

The Steenrod algebra is not Noetherian on either side, so the Bass-Papp theorem does not apply. For non-Noetherian rings, the standard tool for determining whether a given injective module is $\Sigma$-injective is the following theorem of Faith, from \cite{MR193107}:
\begin{theorem}
Suppose that $R$ is a ring, and that $M$ is an injective right $R$-module. 
Then the following conditions are equivalent:
\begin{enumerate}
\item $M$ is $\Sigma$-injective.
\item $M$ is countably $\Sigma$-injective.
\item $R$ satisfies the ascending chain condition on its right ideals which are annihilators of subsets of $M$.
\end{enumerate}
\end{theorem}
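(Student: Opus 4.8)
The plan is to prove the cycle of implications $(1)\Rightarrow(2)\Rightarrow(3)\Rightarrow(1)$, with Baer's criterion (a module is injective if and only if every homomorphism out of a right ideal of $R$ extends to $R$) as the main engine. The implication $(1)\Rightarrow(2)$ is immediate, since a countable set is a set. Before starting, I would record the standard reformulation of condition (3): writing $\operatorname{ann}_R(X)=\{r\in R: xr=0\text{ for all }x\in X\}$ for the right ideal annihilating $X\subseteq M$, the ascending chain condition on such right ideals is equivalent to the statement that every subset $X\subseteq M$ admits a finite subset $X_0\subseteq X$ with $\operatorname{ann}_R(X_0)=\operatorname{ann}_R(X)$. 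This follows by applying the maximal-element form of the ACC to the family of annihilators of finite subsets of $X$.

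For $(2)\Rightarrow(3)$ I would argue by contrapositive. Given a strictly ascending chain $\operatorname{ann}_R(X_1)\subsetneq\operatorname{ann}_R(X_2)\subsetneq\cdots$, I would first normalize by replacing $X_n$ with $\bigcup_{k\ge n}X_k$, which leaves the annihilators unchanged (they are increasing, so their intersection is the smallest term) while making the sets nested decreasingly. Choosing $r_n\in\operatorname{ann}_R(X_{n+1})\setminus\operatorname{ann}_R(X_n)$ and then $x_n\in X_n$ with $x_nr_n\neq0$, the nesting forces $x_mr_n=0$ whenever $m>n$. The assignment $g(r)=(x_mr)_{m\ge1}$ defines a homomorphism $R\to\prod_m M$; its restriction $f$ to the right ideal $\mathfrak a=\sum_n r_nR$ lands in the coproduct $\coprod_m M$, because $g(r_n)$ is supported in the first $n$ coordinates. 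If $f$ extended to $R$, the image of $1$ would be supported in finitely many coordinates, say the first $N$; but $f(r_{N+1})$ has nonzero $(N+1)$-st coordinate $x_{N+1}r_{N+1}$, a contradiction. Hence $\coprod_{\mathbb N}M$ is not injective, so $M$ is not countably $\Sigma$-injective.

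For $(3)\Rightarrow(1)$, fix a set $S$ and, by Baer's criterion, a homomorphism $f\colon\mathfrak a\to M^{(S)}=\coprod_{t\in S}M$ from a right ideal $\mathfrak a$; I must extend $f$ to $R$. Since $M$ is injective, each component $f_t=\pi_t\circ f\colon\mathfrak a\to M$ extends to $R$, so $f_t(a)=m_ta$ for some $m_t\in M$ and all $a\in\mathfrak a$. The crux is to show the support $T=\{t\in S: m_t\mathfrak a\neq0\}$ is finite. Supposing it infinite, I would build by a greedy procedure distinct indices $t_1,t_2,\dots\in T$ and elements $a_1,a_2,\dots\in\mathfrak a$ with $m_{t_j}a_j\neq0$, discarding at each stage the finitely many indices on which the already-chosen $f(a_j)$ is nonzero — legitimate precisely because each $f(a_j)$ has finite support in $M^{(S)}$. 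This yields the triangular relations $m_{t_i}a_j=0$ for $i>j$ together with $m_{t_j}a_j\neq0$, whence the annihilators $C_n=\operatorname{ann}_R(\{m_{t_i}:i>n\})$ satisfy $a_n\in C_n\setminus C_{n-1}$ and form a strictly ascending chain, contradicting (3). Therefore $T$ is finite, $f(\mathfrak a)\subseteq M^{(T)}$, and since $M^{(T)}$ is a finite product of copies of the injective module $M$ it is injective; so $f$ extends to $R\to M^{(T)}\hookrightarrow M^{(S)}$, and $M^{(S)}$ is injective.

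I expect the finite-support step in $(3)\Rightarrow(1)$ to be the main obstacle: the extensions $m_t$ control $f$ only one coordinate at a time, and the real content is the passage from the coordinatewise finiteness of each individual $f(a)$ to a uniform finiteness across all of $\mathfrak a$. The greedy discarding, which converts an assumed infinite support into the triangular pattern needed to manufacture a strictly ascending annihilator chain, is the device that accomplishes this and is the one place where condition (3) is genuinely used.
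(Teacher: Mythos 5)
Your proof is correct. A preliminary remark: the paper does not prove this theorem itself --- it quotes it from Faith and proves instead the graded generalization (Theorem \ref{main new faith thm}), which is the natural point of comparison. Your $(2)\Rightarrow(3)$ matches the paper's graded argument almost exactly: pick witnesses along the chain, assemble them into a map from the union ideal into the countable coproduct, extend by Baer, and use the finite support of the image of $1$ to force stabilization. The routes diverge at $(3)\Rightarrow(1)$. The paper uses the chain condition plus Zorn's Lemma to find, inside the given ideal, a finitely generated subideal with the same annihilator in $M$; it then extends $f$ into the \emph{product} $\prod_{s}M$ and truncates that extension to the finite sub-coproduct hit by the generators, checking that the truncation still agrees with $f$ because the discrepancy is killed by the subideal. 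You instead extend coordinatewise to get elements $m_t$ with $f_t(a)=m_t a$ and show the support $\{t:m_t\mathfrak a\neq 0\}$ is finite by converting a hypothetical infinite support into a strictly ascending annihilator chain via the greedy triangular construction. Both are legitimate deployments of the ACC; yours follows the textbook treatments (Lam, Anderson--Fuller), while the paper's finitely-generated-subideal device is closer to Faith's original and adapts more cleanly to tracking grading degrees. One caveat: your opening ``reformulation'' of (3) --- every $X\subseteq M$ has a finite subset with the same annihilator --- is not justified by the maximal-element form of the ACC, since annihilators of finite subsets of $X$ \emph{shrink} as the subsets grow; you would need a minimal element there, a DCC-type statement. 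You never actually use this reformulation, so the proof stands, but that sentence should be deleted or repaired.
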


Various generalizations of Faith's criterion are known. The most general that the author is aware of is Harada's, from \cite{MR367020}: given a Grothendieck category $\mathcal{C}$ with a generating set $G$ of compact objects and an injective object $Q$ of $\mathcal{C}$, for each left $\hom_{\mathcal{C}}(Q,Q)$-submodule $N$ of $\hom_{\mathcal{C}}(S,Q)$, we can form the limit $\cap_{f\in N} \ker f$, which is a subobject of $S$. Such a subobject of $S$ is called an {\em annihilator ideal of $S$ for $Q$.} Harada proves that $Q$ is $\Sigma$-injective if and only if, for each $S\in G$, the partially-ordered set of annihilator ideals of $S$ for $Q$ satisfies the ascending chain condition. 

While the category of graded $A$-modules is indeed Grothendieck, Harada's generalization of Faith's criterion does not make the fine distinctions that we will need in order to get a good grasp on the graded analogues of $\Sigma$-injectivity. Consider the following result of Margolis\footnote{See sections 13.2 and 13.3 of \cite{MR738973}. The first two parts of Theorem \ref{observation 1} are also consequences of the Moore-Peterson theorem, Theorem 2.7 from \cite{MR335572}, establishing that bounded-below graded $A$-modules are free iff they are gr-injective.}:
\begin{theorem}\label{observation 1}
Let $M$ be a bounded-below free graded module over the Steenrod algebra (with its usual grading). Then $M$ is gr-injective\footnote{Here and from now on, we use a standard piece of terminology from graded ring theory: {\em gr-injective} means ``graded in the injective module category.''}. Furthermore:
\begin{itemize}
\item A direct sum of copies of $M$, without suspensions, remains gr-injective.
\item More generally: a direct sum $\coprod_{s\in S}\Sigma^{d(s)} M$ of suspensions of copies of $M$ remains gr-injective as long as there is a lower bound on the degree $\{ d(s): s\in S\}$ of the suspensions.
\item However, if there is no lower bound on the degrees $\{ d(s): s\in S\}$ of the suspensions, then the direct sum $\coprod_{s\in S}\Sigma^{d(s)} M$ is {\em not} gr-injective.
\end{itemize}
\end{theorem}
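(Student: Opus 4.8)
The plan is to prove the three bullet points of Theorem \ref{observation 1} together, by reducing them to the forthcoming graded generalization of Faith's criterion. According to the graded Faith theorem (whose statement I would first extract from the later sections of the paper), a gr-injective graded $A$-module $Q$ has all its suspended coproducts $\coprod_{s \in S} \Sigma^{d(s)} Q$ remaining gr-injective precisely when a certain ascending chain condition holds on the graded annihilator ideals of $A$ with respect to $Q$. So the strategy is: first establish that $M$ itself is gr-injective (this is already asserted as the opening sentence of the theorem, and follows from Theorem \ref{A is self-inj} together with Margolis's structure theory of bounded-below free graded $A$-modules), then analyze how the suspensions $\Sigma^{d(s)}$ interact with the relevant annihilator-ideal chain condition.

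The key observation driving all three bullets is that suspension shifts degrees, so a family of suspensions $\{\Sigma^{d(s)} M\}_{s \in S}$ that is bounded below in the $d(s)$ behaves, for the purposes of the graded annihilator condition, like a bounded-below graded module — and bounded-below free graded $A$-modules are gr-injective by the first sentence of the theorem. First I would treat the special case of bullet one (no suspensions, $d(s) = 0$ for all $s$): here I would show directly that $\coprod_{s \in S} M$ is again a bounded-below free graded $A$-module, since coproducts of free modules are free and the common lower bound of the summands is preserved; hence it is gr-injective by the theorem's first clause. For bullet two, the same reduction works once one notes that a uniform lower bound $b \leq d(s)$ for all $s$ makes $\coprod_{s \in S} \Sigma^{d(s)} M$ bounded below (by $b$ plus the lower bound of $M$) and still free, so gr-injectivity again follows from the bounded-below free case.

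For the third bullet — the failure of gr-injectivity when the degrees $\{d(s)\}$ are unbounded below — I would invoke the converse direction supplied by Margolis (Proposition 10 in section 13.2 of \cite{MR738973}), exactly as in the proof of Theorem \ref{A is not self-inj}: a free graded left $A$-module that is not bounded below cannot be gr-injective. The point is that when the suspension degrees $d(s)$ have no lower bound, the coproduct $\coprod_{s \in S} \Sigma^{d(s)} M$ acquires nonzero elements in arbitrarily negative degrees, so it is a free graded module that fails to be bounded below, and Margolis's result forbids its gr-injectivity.

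I expect the main obstacle to be the bookkeeping in the third bullet: one must verify carefully that unbounded suspension degrees genuinely force the coproduct to be unbounded below as a graded module, which requires knowing that $M$ is nonzero in at least one degree (so that $\Sigma^{d(s)} M$ contributes nonzero elements near degree $d(s)$) and that these contributions do not cancel in the coproduct. Since $M$ is a nonzero free module, it is nonzero in its bottom degree, and the coproduct's degree-$n$ piece is the direct sum over $s$ of the degree-$n$ pieces of each $\Sigma^{d(s)} M$, so no cancellation occurs; the unboundedness of $\{d(s)\}$ then directly yields nonzero elements in arbitrarily negative degrees. Once this is pinned down, Margolis's converse closes the argument, and the interplay between the graded Faith criterion and the bounded-below hypothesis makes the first two bullets essentially immediate.
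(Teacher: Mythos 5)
Your argument is correct, but note that the paper does not actually prove Theorem \ref{observation 1}: it is presented as a quoted result of Margolis (sections 13.2 and 13.3 of \cite{MR738973}), with the first two parts also attributed to Moore--Peterson, and it serves in the paper as motivation for the graded Faith criterion rather than as a consequence of it. Your actual argument is the natural reduction to exactly the two facts the paper cites from Margolis: the first two bullets follow because a coproduct of suspensions of a bounded-below free module, with the suspension degrees bounded below, is again a bounded-below free graded $A$-module, hence gr-injective by the theorem's first clause; and the third bullet follows from the converse (Proposition 10 in section 13.2 of \cite{MR738973}: a free graded $A$-module that is gr-injective must be bounded below), once one checks, as you do, that unbounded suspension degrees of a nonzero $M$ force the coproduct to be unbounded below. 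Two small remarks. First, your opening and closing framing in terms of the graded Faith criterion is a red herring: none of your three arguments actually uses it, and within the paper's logical order it would be anachronistic (though the paper does observe afterward, via Corollary \ref{simpler cor}, that the first two bullets hold over any gr-self-injective graded ring; the third bullet is genuinely specific to the Steenrod algebra and does not follow from the general theory). Second, the third bullet implicitly requires $M\neq 0$, which you correctly flag; with that caveat your bookkeeping about the bottom degree and the absence of cancellation in a direct sum is fine.
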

It is clear from Theorem \ref{observation 1} that there are several natural but entirely distinct notions of $\Sigma$-injectivity in the graded setting. In the following definition, we give names to these notions of graded $\Sigma$-injectivity.
\begin{definition}
Let $R$ be a graded ring. We say that an gr-injective graded left $R$-module $M$ is:
\begin{itemize}
\item {\em strictly $\Sigma$-injective} if the direct sum $\coprod_{s\in S} M$ is gr-injective for all sets $S$.
\item {\em unboundedly $\Sigma$-injective} if the direct sum $\coprod_{s\in S} \Sigma^{d(s)} M$ is gr-injective for all sets $S$ and all functions $d: S \rightarrow \mathbb{Z}$.
\item {\em bounded-belowly $\Sigma$-injective} (respectively, {\em bounded-abovely $\Sigma$-injective}) if the direct sum $\coprod_{s\in S} \Sigma^{d(s)} M$ is gr-injective for all sets $S$ and all functions $d: S \rightarrow \mathbb{Z}$ such that there is a lower bound (respectively, upper bound) on the values taken by $d$.
\item If $N$ is a set of integers, we say that $M$ is {\em $(\Sigma,N)$-injective} if the direct sum $\coprod_{s\in S} \Sigma^{d(s)} M$ is gr-injective for all sets $S$ and all functions $d: S \rightarrow N$.
\end{itemize}
We have also the countable analogues of the above: for example, we say that $M$ is {\em countably strictly $\Sigma$-injective} if $\coprod_{s\in S} M$ is gr-injective for all countable sets $S$, and so on.
\end{definition}
Harada's theorem can be used to characterize the gr-injective graded modules that are strictly $\Sigma$-injective, or equivalently, $(\Sigma,\{0\})$-injective. There is another precedent in the literature for our study of graded $\Sigma$-injectivity: the paper \cite{MR839577} is about one version of graded $\Sigma$-injectivity, which its authors call ``gr-$\Sigma$-injectivity.'' That notion of ``gr-$\Sigma$-injectivity'' is equivalent to what we call ``unbounded $\Sigma$-injectivity.'' N\v{a}st\v{a}sescu and Raianu prove that, if a graded module is unboundedly $\Sigma$-injective, then its underlying ungraded module is $\Sigma$-injective. This is quite distinct from the notions of bounded-above $\Sigma$-injectivity and bounded-below $\Sigma$-injectivity, which as far as we know have not been studied before. In contrast to unbounded $\Sigma$-injectivity, as a consequence of Corollary \ref{simple cor}, we will see that bounded-above $\Sigma$-injectivity and bounded-below $\Sigma$-injectivity are {\em not} preserved upon forgetting down to the ungraded module category.

The aim of this section is to prove a version of Faith's theorem which gives us a useful necessary and sufficient criterion for each of the various notions of graded $\Sigma$-injectivity.

In order to state our generalization of Faith's theorem, we introduce a graded version of some notation from \cite{MR193107}: given a graded left $R$-module $M$ and a set $X$ of homogeneous elements of $R$, we write $X^{\perp}$ for the set of homogeneous elements $m\in M$ such that $xm=0$ for all $x\in X$. Given a set $Y$ of homogeneous elements of $M$, we write $Y^{\perp}$ for the set of homogeneous elements $r\in R$ such that $ry=0$ for all $y\in Y$. 

We introduce two more simple pieces of notation, and one more piece of terminology: 
\begin{itemize}
\item Given an integer $m$ and a set $N$ of integers, we will write $m + N$ for the set of integers $\{ m+n: n\in N\}$. Similarly, $m - N$ will of course denote the set of integers $\{ m-n: n\in N\}$
\item Given a set $N$ of integers, a graded ring $R$, and a graded left $R$-module $M$, we say that an ascending chain of homogeneous left ideals $I_0\subseteq I_1\subseteq I_2\subseteq\dots$ of $R$ is {\em $M$-annihilator-stable for all degrees in $N$} if there exists some integer $\ell$ such that the submodule inclusions $I_{\ell}^{\perp} \supseteq I_{\ell+1}^{\perp}\supseteq I_{\ell+2}^{\perp}\dots$ are equalities in each grading degree $d$ in $N$, i.e.,
\[ (I_{\ell}^{\perp})^d = (I_{\ell+1}^{\perp})^d = (I_{\ell+2}^{\perp})^d = \dots\]
for all $d\in N$.
\end{itemize}

Now we are ready for the main theorem. It is the graded generalization of the main theorem of Faith's paper \cite{MR193107}. Naturally our proof owes much to Faith's, although some of the ideas in our approach differ from Faith's, 
especially where care concerning the gradings is required.
\begin{theorem}\label{main new faith thm}
Let $R$ be a graded ring, let $N$ be a set of integers, and let $M$ be a gr-injective graded left $R$-module.
Then the following are equivalent:
\begin{enumerate}
\item $M$ is $(\Sigma,N)$-injective.
\item $M$ is countably $(\Sigma,N)$-injective.
\item For each integer $m$, each ascending chain \begin{equation}\label{seq 231} I_0\subseteq I_1 \subseteq I_2 \subseteq \dots\end{equation} of homogeneous left ideals of $R$ is $M$-annihilator-stable for all degrees in $m-N$.
\item $M$ is $(\Sigma,m+N)$-injective for every integer $m$.
\item $M$ is countably $(\Sigma,m+N)$-injective for every integer m.
\end{enumerate}
\end{theorem}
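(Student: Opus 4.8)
The plan is to prove the cycle $(3)\Rightarrow(1)\Rightarrow(2)\Rightarrow(3)$ and to dispatch the conditions involving $m+N$ by a shift argument. First I would record the elementary observation that the suspension $\Sigma^m$ is an auto-equivalence of the category of graded left $R$-modules which commutes with coproducts; since $\coprod_{s}\Sigma^{d(s)}M=\Sigma^m\coprod_s \Sigma^{d(s)-m}M$, this shows $M$ is $(\Sigma,N)$-injective if and only if it is $(\Sigma,m+N)$-injective, and likewise countably. Granting this, $(1)\Leftrightarrow(4)$ and $(2)\Leftrightarrow(5)$ are immediate and $(1)\Rightarrow(2)$ is trivial, so everything reduces to $(1)\Leftrightarrow(2)\Leftrightarrow(3)$. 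To test gr-injectivity of $Q=\coprod_{s\in S}\Sigma^{d(s)}M$ I would invoke the graded Baer criterion (e.g.\ from \cite{MR551625}): it suffices to extend to $R$ every degree-$0$ map $f\colon I\to Q$ from a homogeneous left ideal $I$. Writing $f=(f_s)_s$ and using that each $\Sigma^{d(s)}M$ is gr-injective (as $M$ is, and $\Sigma^{d(s)}$ preserves gr-injectivity), each component extends, and the extension is $r\mapsto r m_s$ for a homogeneous $m_s\in M$ of degree $-d(s)$; choosing the zero extension whenever $f_s=0$, the support $\{s: f_s\neq 0\}$ equals $\{s:m_s\notin I^{\perp}\}$. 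Since any $R$-linear extension $\bar f\colon R\to Q$ is evaluation of $\bar f(1)\in Q$, which has finite support, $f$ extends into the coproduct precisely when this support is finite.

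This reduces $(3)\Rightarrow(1)$ to showing that (3) forces every such $f$ to have finite support. Arguing by contradiction, I would pick distinct $s_1,s_2,\dots$ in the support and form the ascending chain of homogeneous left ideals $W_k=\{m_{s_i}:i>k\}^{\perp}$. Condition (3), applied with the integer chosen so that the relevant degrees $-d(s_i)$ lie in the prescribed degree set, yields a single index $\ell$ past which the descending chain $(W_k^{\perp})^{d}$ is constant for all admissible $d$ simultaneously. The crucial input is that $f$ lands in the coproduct: for each $x\in I$ only finitely many $x m_{s_i}$ are nonzero, so $x\in W_k$ for $k\gg 0$, whence $I\subseteq\bigcup_k W_k$ and therefore $\bigcap_k W_k^{\perp}=\left(\bigcup_k W_k\right)^{\perp}\subseteq I^{\perp}$. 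Choosing any $s_i$ with $i>\ell$ gives $m_{s_i}\in (W_\ell^{\perp})^{-d(s_i)}$, and stabilization identifies this constant value in degree $-d(s_i)$ with $\bigcap_k (W_k^{\perp})^{-d(s_i)}\subseteq (I^{\perp})^{-d(s_i)}$; hence $m_{s_i}\in I^{\perp}$, contradicting $s_i$ being in the support. So the support is finite and $f$ extends.

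For $(2)\Rightarrow(3)$ I would argue contrapositively. If (3) fails, there is an integer $m$ and an ascending chain $I_0\subseteq I_1\subseteq\dots$ of homogeneous left ideals that is not $M$-annihilator-stable for the degrees in $m-N$; this produces indices $k_1<k_2<\dots$ and degrees $d_j\in m-N$ together with homogeneous $n_j\in (I_{k_j}^{\perp})^{d_j}\setminus (I_{k_j+1}^{\perp})^{d_j}$. Setting $e_j=-d_j\in(-m)+N$ and $I=\bigcup_k I_k$, the assignment $g(x)=(x n_j)_j$ defines a degree-$0$ map $I\to\coprod_j\Sigma^{e_j}M$ landing in the coproduct, because each $x\in I$ lies in some $I_k$ and is then annihilated by every $n_j$ with $k_j\geq k$. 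Any extension $\bar g$ would have $\bar g(1)$ of finite support with $n_j-\bar g(1)_j\in I^{\perp}\subseteq I_{k_j+1}^{\perp}$; for the cofinitely many $j$ with $\bar g(1)_j=0$ this forces $n_j\in(I_{k_j+1}^{\perp})^{d_j}$, a contradiction. Thus $\coprod_j\Sigma^{e_j}M$ is not gr-injective, and since its suspension degrees lie in $(-m)+N$, the shift lemma shows this contradicts countable $(\Sigma,N)$-injectivity, i.e.\ (2).

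The heart of the argument, and the step I expect to demand the most care, is the upgrade from pointwise-finite support (automatic, since $f$ maps into a coproduct) to globally-finite support, which is exactly where the stabilization hypothesis is consumed. What makes the graded statement more delicate than Faith's is that I must arrange the degrees $-d(s)$ of the elements $m_s$ to fall inside the single degree window $m-N$ on which (3) guarantees stabilization and---since the witnessing index $i>\ell$, hence the degree $-d(s_i)$, is not known in advance---I genuinely need one stabilization index $\ell$ valid across all degrees in $m-N$ at once, rather than a separate bound in each degree. This uniform-in-degree stabilization is precisely the feature distinguishing the various graded notions of $\Sigma$-injectivity, and getting the bookkeeping of the shifts, the Galois connection $(\,\cdot\,)^{\perp}$, and the degree sets $N$, $m-N$, $m+N$ to line up consistently is the part requiring the most vigilance.
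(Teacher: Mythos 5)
Your proposal is correct, but your proof of $(3)\Rightarrow(1)$ takes a genuinely different route from the paper's. The preliminary shift lemma, the trivial implications, and your contrapositive rendering of $(2)\Rightarrow(3)$ coincide in substance with the paper's argument (the paper runs that step directly, choosing arbitrary homogeneous $x_n\in I_n^{\perp}$ of prescribed degrees and showing they eventually lie in $\left(\bigcup_n I_n\right)^{\perp}$, but the mechanism---a map out of $\bigcup_n I_n$ into a countable coproduct, extended by the graded Baer criterion, with the image of $1$ having finite support---is the same). For $(3)\Rightarrow(1)$, however, the paper first uses the stabilization hypothesis and a maximality argument on the preordered set of finitely generated homogeneous subideals to produce a finitely generated $I'\subseteq I$ with $((I')^{\perp})^d=(I^{\perp})^d$ for all $d\in m-N$; it then extends $f$ into the \emph{product} $\prod_{s}\Sigma^{d(s)}M$, notes that the images of the finitely many generators of $I'$ have finite joint support $T$, and truncates $\tilde f(1)$ to $T$. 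You instead extend componentwise, reduce gr-injectivity of the coproduct to finiteness of $\{s:f_s\neq 0\}=\{s:m_s\notin I^{\perp}\}$, and rule out infinite support directly via the chain $W_k=\{m_{s_i}:i>k\}^{\perp}$, using that pointwise finiteness of support forces $I\subseteq\bigcup_k W_k$ and hence $\bigcap_k W_k^{\perp}\subseteq I^{\perp}$. Your route consumes condition (3) only on an explicitly exhibited countable chain and avoids the Zorn-type step entirely; the paper's route isolates a reusable intermediate fact (every homogeneous ideal admits a finitely generated subideal with the same annihilator in all degrees in $m-N$), which is the graded form of a standard ingredient of Faith's original proof. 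One point you should make explicit: the graded Baer criterion requires extending maps $\Sigma^n I\to Q$ for \emph{all} integers $n$, not only degree-zero maps from unsuspended ideals. Your argument does cover this, since a degree-zero map $\Sigma^n I\to\coprod_s\Sigma^{d(s)}M$ is a degree-zero map $I\to\coprod_s\Sigma^{d(s)-n}M$, the components $m_s$ then have degrees in $n-N$, and condition (3) with $m=n$ supplies exactly the uniform-in-degree stabilization you need---this is the content of your phrase ``the integer chosen so that the relevant degrees lie in the prescribed degree set,'' and it is precisely why condition (3) must be quantified over all $m$.
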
 
\begin{proof}\leavevmode
\begin{description}
\item[1 implies 2, 4 implies 5, 4 implies 1, and 5 implies 2]
Immediate.
\item[2 implies 3] 
Suppose that $m$ is an integer, suppose that $M$ is countably $(\Sigma,N)$-injective, and suppose we are given an ascending sequence \eqref{seq 231} of homogeneous left ideals.
Suppose we are given a function $d: \mathbb{N}\rightarrow -m+N$.
For each nonnegative integer $n$, choose a homogeneous element $x_{n}\in I_{n}^{\perp}$ of degree $-d(n)$.
For any given homogeneous element $r\in \bigcup_{n\geq 0}I_n$, there exists some integer $q$ such that $r\in I_q$. Since $I_q\subseteq I_{q+1}\subseteq \dots$, we have also that $I_q^{\perp}\supseteq I_{q+1}^{\perp} \supseteq \dots$, and consequently $ri_n=0$ for all $n>q$. Consequently all but finitely many of the components in 
\[ x(r) := \left( rx_0, rx_1, rx_2, rx_3, \dots \right)\in \prod_{n\geq 0} \Sigma^{d(n)+m}M \]
are zero. That is, $x(r)$ is a homogeneous element of the direct {\em sum} \linebreak $\coprod_{n\geq 0} \Sigma^{d(n)+m}M$, not merely the direct {\em product}. 

The degree of $x(r)$ is equal to $m$ plus the degree of $r$.
Consequently the right $R$-module homomorphism 
\begin{align*} 
 x: \Sigma^m\left( \bigcup_{n\geq 0} I_n \right) &\rightarrow 
  \coprod_{n\geq 0}\Sigma^{d(n)+m} M\\
 r &\mapsto x(r) \end{align*} respects the grading. Each of the integers $d(n)+m$ is in the set $N$, so by the $(\Sigma,N)$-injectivity of $M$, the graded form of Baer's criterion\footnote{The graded Baer criterion is standard; see e.g. I.2.4 of \cite{nastasescu2011graded}. It is as follows: for any graded ring $R$, a graded $R$-module $M$ is gr-injective if and only if, for every graded left ideal $I$ of $R$ and every diagram 
\[\xymatrix{
 \Sigma^n I \ar[r]\ar[rd] & \Sigma^n R \ar@{-->}[d] \\ & M
}\]
in the category of graded $R$-modules in which the top horizontal map is the canonical inclusion, a map of graded $R$-modules exists which fills in the dotted arrow and makes the diagram commute.} yields a graded right $R$-module homomorphism $g: \Sigma^m R\rightarrow \coprod_{n\geq 0}\Sigma^{d(n)+m} M$ such that $g(r) = x(r)$ for all $r\in \Sigma^m\left(\bigcup_{n\geq 0}I_n\right)$. Since the element $g(1)$ is an element of the direct sum $\coprod_{n\geq 0}\Sigma^{d(n)+m} M$, it must be zero in all but finitely summands. In particular, there must be some integer $\ell$ such that 
\begin{align*} 
 g(1) &= (g_0, g_1, g_2, \dots , g_{\ell}, 0, 0, \dots ) \\
      &\in \coprod_{n\geq 0}\Sigma^{d(n)+m} M,\end{align*}
and consequently
\begin{align*} 
 g(r) &= (rg_0, rg_1, rg_2, \dots , rg_{\ell}, 0, 0, \dots ) \\
      &= (rx_0, rx_1, rx_2, \dots , rx_{\ell}, 0, 0, \dots ) 
\end{align*}
for each $r\in R$. Hence for every $n>\ell$, the element $x_{n}\in M$ is annihilated by $\bigcup_{n\geq 0} I_n$. Hence $x_n \in (\bigcup_{n\geq 0}I_n)^{\perp}$, which is a subset of $I_t^{\perp}$ for every integer $t$.

Now take stock of what we have just shown: we began with an arbitrary sequence $x_1,x_2,x_3,\dots$ of homogeneous elements of $M$ whose degrees are in the set $m-N$, and such that each $x_n$ is in $I_n^{\perp}$. We have just shown that, no matter how these choices are made, there exists some integer $\ell$ such that all the terms $x_{\ell},x_{\ell+1},x_{\ell+2}, \dots$ are in the {\em same} stage $I_{\ell}^{\perp}$. Hence what we have shown is that the sequence $I_1^{\perp}\supseteq I_2^{\perp}\supseteq I_3^{\perp}\supseteq\dots$ is eventually constant in each degree in $m-N$, exactly as we wanted to show.
\item[3 implies 1]
We first claim that condition 3 ensures that, for each homogeneous left ideal $I$ of $R$, there exists a {\em finitely generated}\footnote{To be clear, here and throughout this paper, whenever we say that a homogeneous ideal is ``finitely generated,'' we shall always mean that it has a finite set of {\em homogeneous} generators.} homogeneous left ideal $I_1$ of $R$ contained in $I$ such that $(I^{\perp})^d = (I_1^{\perp})^d$ for every degree $d$ in the set $m-N$. The proof is as follows: consider the collection $\Ideals(M,I,m-N)$ of all finitely generated homogeneous left ideals of $R$ contained in $I$. Preorder this collection by letting $J_1\geq J_2$ if and only if $(J_1^{\perp})^d\subseteq (J_2^{\perp})^d$  for all $d$ in $m-N$. Consequently the set of elements of $\Ideals(M,I,m-N)$ depends only on $I$, not on $M$, not on $m$, and not on $N$. However, the {\em preordering} on $\Ideals(M,I,m-N)$ {\em does} depend on all three of the variables $M$,$I$, and $m-N$.

Let $\pi_0\Ideals(M,I,m-N)$ be the partially-ordered set of equivalence classes in the preordered set $\Ideals(M,I,m-N)$. We aim to show that every ascending sequence in $\pi_0\Ideals(M,I,m-N)$ stabilizes. Given a pair of ideals $J_1,J_2\in \Ideals(M,I,m-N)$ such that $J_1\leq J_2$, it is routine to check that the sum $J_1+J_2$ of the ideals satisfies both $J_2\leq J_1+J_2$ and $J_1+J_2\leq J_2$, i.e., we have an equivalence $J_1+J_2\sim J_2$ in $\Ideals(M,I,m-N)$. Consequently, given an ascending sequence
\begin{equation}\label{seq 11} J_1 \leq J_2 \leq J_3 \leq J_4 \leq \dots\end{equation} in $\Ideals(M,I,m-N)$, we have a sequence of containments of ideals
\[ J_1\subseteq J_1+ J_2 \subseteq J_1+J_2+J_3\subseteq J_1+J_2+J_3+J_4\subseteq\dots ,\]
each of which is in $\Ideals(M,I,m-N)$, and which stabilizes if and only if \eqref{seq 11} stabilizes.

So we suppose that we have a sequence of containments of ideals as in \eqref{seq 231}, each of which is an element of $\Ideals(M,I,m-N)$. By the assumption of condition 3, the sequence \eqref{seq 231} is $M$-annihilator stable for each degree in $m-N$. Consequently \eqref{seq 231} stabilizes in $\pi_0\Ideals(M,I,m-N)$, as desired.

Since the ascending chains in $\pi_0\Ideals(M,I,m-N)$ all stabilize, Zorn's Lemma ensures that $\pi_0\Ideals(M,I,m-N)$ has a maximal element. Let $I^{\prime}$ be an element of $\Ideals(M,I,m-N)$ representing a maximal element of $\pi_0\Ideals(M,I,m-N)$. If $x$ is a homogeneous element of $I$, then $I^{\prime} + Rx$ is a member of $\Ideals(M,I,m-N)$ containing $I^{\prime}$. By maximality of $I^{\prime}$, we must have $((I^{\prime} + Rx)^{\perp})^d = ((I^{\prime})^{\perp})^d$ for all $d\in m-N$. Since $x\in I^{\prime} + Rx$, we must have $xm=0$ for all $m\in \left( I^{\prime} + Rx\right)^{\perp}$, hence $xm=0$ for all $m\in ((I^{\prime})^{\perp})^d$.

This argument applies for all homogeneous $x\in I$, so we have $(I^{\perp})^d\supseteq ((I^{\prime})^{\perp})^d$. The reverse containment $I^{\perp}\subseteq (I^{\prime})^{\perp}$ follows from $I^{\prime}$ being a subideal of $I$, so we have $(I^{\perp})^d = ((I^{\prime})^{\perp})^d$ for all $d\in m-N$. So $I^{\prime}$ is the desired finitely generated subideal of $I$.

Now we use the gr-injectivity of $M$ together with the graded Baer criterion. Given a graded left $R$-module homomorphism $f: \Sigma^m I \rightarrow \coprod_{s\in S} \Sigma^{d(s)} M$ with each $d(s)$ in $N$, we have a commutative diagram of graded left $R$-modules given by the solid arrows depicted below:
\begin{equation}\label{comm diag 32109}\xymatrix{
\Sigma^m I^{\prime}\ar[r] 
 & \Sigma^m I \ar[d]^f \ar[r]
 & \Sigma^m R \ar[d]^{\tilde{f}}\ar@{-->}[dl]\\
 & \coprod_{s\in S} \Sigma^{d(s)} M \ar[r]
 & \prod_{s\in S} \Sigma^{d(s)} M. }\end{equation}
In \eqref{comm diag 32109}, the map $\tilde{f}$ is obtained using the universal property of the gr-injective module $\prod_{s\in S} \Sigma^{d(s)} M$, and $I^{\prime}$ is a finitely generated homogeneous left ideal of $R$ of the kind just constructed using Zorn's Lemma, i.e., $I^{\prime}$ is contained in $I$, and $(I^{\prime})^{\perp}$ coincides with $I^{\perp}$ in every degree in $m-N$. The horizontal maps in \eqref{comm diag 32109} are each the natural subset inclusion maps.

Since $I^{\prime}$ is finitely generated, we may choose a finite homogeneous generating set $r_1, \dots ,r_k$ for it. The images $\tilde{f}(r_1), \dots ,\tilde{f}(r_k)$ of $r_1, \dots ,r_k$ land in the direct sum $\coprod_{s\in S} \Sigma^{d(s)} M$, so there is a finite subset $T$ of $S$ such that the image of $\tilde{f}\mid_{I^{\prime}}$ is contained in $\coprod_{s\in T}\Sigma^{d(s)}M\subseteq \prod_{s\in S}\Sigma^{d(s)}M$. 

Given an element $z\in \prod_{s\in S}\Sigma^{d(s)}M$ and an element $s\in S$, write $z_s$ for the component of $z$ in the factor $\Sigma^{d(s)}M$ of $\prod_{s\in S}\Sigma^{d(s)}M$.
Let $\hat{f}: \Sigma^m R\rightarrow \coprod_{s\in S}\Sigma^{d(s)}M$ be the graded left $R$-module map determined by
\begin{align*}
 \hat{f}(1)_s &= \left\{\begin{array}{ll} \tilde{f}(1)_s &\mbox{\ if\ } s\in T \\ 0 &\mbox{\ if\ } s\notin T.\end{array}\right.
\end{align*}

We have defined $\hat{f}$ so that $r_i\cdot \hat{f}(1) = r_i\cdot \tilde{f}(1)$ is true for all $i=1, \dots ,k$. Since $r_1, \dots,r_k$ generate $I^{\prime}$, we have $(\hat{f}(1) - \tilde{f}(1))_s\in (I^{\prime})^{\perp}$ for each $s\in S$. In particular, $(\hat{f}(1) - \tilde{f}(1))_s$ is in degree $m-d(s)$ in $(I^{\prime})^{\perp}\subseteq M$. Since $m-d(s)\in m-N$, we have $(\hat{f}(1) - \tilde{f}(1))_s\in I^{\perp}$ as well. Hence $r\cdot \hat{f} = r\cdot \tilde{f}(1)$ for all $r\in I$, i.e., $\hat{f}$ and $\tilde{f}$ agree on $I$. Consequently $\hat{f}$ fills in the dotted arrow in diagram \eqref{comm diag 32109} and makes the upper-left triangle commute. This is precisely the condition necessary to obtain gr-injectivity of $\coprod_{s\in S}\Sigma^{d(s)}M$ from the graded Baer criterion. Hence $M$ is $(\Sigma,m-N)$-injective.
\item[1 implies 4, and 2 implies 5] The suspension functor $\Sigma$ is an automorphism of the graded module category, so a graded module is $(\Sigma,N)$-injective if and only if it is $(\Sigma,m+N)$-injective for all integers $m$.
\end{description}
\end{proof}

\begin{corollary}\label{simple cor}
Let $R$ be a graded ring, and let $M$ be a gr-injective graded left $R$-module. Then:
\begin{itemize}
\item $M$ is strictly $\Sigma$-injective if and only if, for every integer $d$ and every ascending chain \begin{equation}\label{seq 232} I_0\subseteq I_1 \subseteq I_2 \subseteq \dots\end{equation} of homogeneous left ideals of $R$, the descending chain of graded submodules of $M$
\begin{equation}\label{seq 233} I_0^{\perp}\supseteq I_1^{\perp} \supseteq I_2^{\perp} \supseteq \dots\end{equation} is eventually constant in degree $d$.
\item $M$ is unboundedly $\Sigma$-injective if and only if, for every ascending chain \eqref{seq 232} 
of homogeneous left ideals of $R$, the descending chain \eqref{seq 233} of graded submodules of $M$
is eventually constant\footnote{To be clear: the difference between this condition and the previous condition is that, for unbounded $\Sigma$-injectivity, we must have a single integer $\ell$ such that the sequence \eqref{seq 233} is constant, in all degrees $d$, starting with the $\ell$th stage in the sequence. By contrast, for {\em strict} $\Sigma$-injectivity, the sequence \eqref{seq 233} could stabilize at a different stage for each given degree, without there being a single stage by which \eqref{seq 233} is constant in {\em every} degree.}.
\item $M$ is bounded-abovely $\Sigma$-injective (respectively, bounded-belowly $\Sigma$-injective) if and only if, for every integer $m$ and every ascending chain \eqref{seq 232} 
of homogeneous left ideals of $R$, the descending chain \eqref{seq 233} of graded submodules of $M$
is eventually constant in all degrees $\geq m$ (respectively, all degrees $\leq m$). 
\end{itemize}
\end{corollary}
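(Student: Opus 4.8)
The plan is to derive all three equivalences directly from Theorem \ref{main new faith thm} by feeding it three carefully chosen parameter sets $N$ and then unwinding the resulting instances of conditions (1), (3), and (4). The only real content is bookkeeping with the two index sets that appear in the theorem: the $\Sigma$-injectivity conditions refer to $m+N$, while the chain condition (3) refers to $m-N$, so throughout I must track the sign flip and match upper and lower bounds correctly. I will repeatedly use the identity $m - N = \{m-n : n \in N\}$ together with the definition of $M$-annihilator-stability, recalling that the latter demands a single stabilization stage $\ell$ valid simultaneously for all degrees in the given set.

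For the strict case I would apply the theorem with $N = \{0\}$. By definition $(\Sigma,\{0\})$-injectivity is exactly strict $\Sigma$-injectivity, so condition (1) of the theorem is the property I want. Condition (3) then reads: for each integer $m$, the chain \eqref{seq 232} is $M$-annihilator-stable for all degrees in $m - \{0\} = \{m\}$, i.e. the descending chain \eqref{seq 233} is eventually constant in degree $m$. Letting $m$ range over $\mathbb{Z}$ and relabeling it $d$ gives precisely the stated criterion. Here the stabilization stage may depend on the degree, since the theorem supplies a separate $\ell$ for each singleton $\{m\}$; this is the distinction highlighted in the footnote.

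For the unbounded case I would take $N = \mathbb{Z}$, so that $(\Sigma,\mathbb{Z})$-injectivity is unbounded $\Sigma$-injectivity (condition (1)). Now $m - N = \mathbb{Z}$ for every $m$, so condition (3) degenerates to the single statement that there is one stage $\ell$ at which \eqref{seq 233} becomes constant in all degrees at once, which is exactly eventual constancy of \eqref{seq 233}. For the bounded-above case I would take $N = \{n \in \mathbb{Z} : n \le 0\}$. Here condition (1) gives $(\Sigma,N)$-injectivity, which is not literally bounded-abovely $\Sigma$-injective, so I would pass through condition (4): $(\Sigma,N)$-injectivity is equivalent to $(\Sigma, m+N)$-injectivity for all $m$, and since $m + N = \{d : d \le m\}$, a short argument shows that admitting injective coproducts for every suspension function with values $\le m$, for every $m$, is the same as admitting them for every bounded-above suspension function, i.e. bounded-abovely $\Sigma$-injectivity. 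Condition (3), with $m - N = \{d : d \ge m\}$, then yields eventual constancy of \eqref{seq 233} in all degrees $\ge m$ for every $m$. The bounded-below case is the mirror image, obtained by taking $N = \{n \in \mathbb{Z} : n \ge 0\}$.

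The argument is routine once the main theorem is in hand, so I do not anticipate a genuine obstacle; the one place that demands care is the identification in the bounded cases, where I must verify that quantifying condition (4) over all integers $m$ exactly reproduces the ``for some bound'' quantifier built into the definitions of bounded-abovely and bounded-belowly $\Sigma$-injectivity, and that the sign convention correctly sends an upper bound on the suspension degrees to a lower bound on the degrees in which the annihilator chain \eqref{seq 233} must stabilize.
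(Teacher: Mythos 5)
Your proposal is correct and is exactly the intended derivation: the paper states the corollary without a separate proof, as a direct specialization of Theorem \ref{main new faith thm} to $N=\{0\}$, $N=\mathbb{Z}$, and $N=\mathbb{Z}_{\leq 0}$ (resp.\ $\mathbb{Z}_{\geq 0}$), using the equivalence of conditions (1), (3), and (4) and the sign flip $m-N$ just as you describe.
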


\begin{corollary}\label{simpler cor}
Let $R$ be a graded ring. Every bounded-below gr-injective graded $R$-module is bounded-belowly $\Sigma$-injective. Furthermore, every bounded-above gr-injective graded $R$-module is bounded-abovely $\Sigma$-injective.
\end{corollary}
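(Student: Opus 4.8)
The plan is to read both assertions off Corollary \ref{simple cor}, after using boundedness to cut the relevant grading degrees down to a finite set. Treat the bounded-below case first, so assume $M$ is gr-injective with $M^d = 0$ for $d < b$. By the bounded-below clause of Corollary \ref{simple cor}, what must be shown is that for each integer $m$ and each ascending chain $I_0 \subseteq I_1 \subseteq \dots$ of homogeneous left ideals of $R$, there is a single index $\ell$ with $(I_k^\perp)^d = (I_\ell^\perp)^d$ for all $k \geq \ell$ and all $d \leq m$.

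First I would discard the degrees where nothing happens: since $I_k^\perp$ is a graded submodule of $M$, its degree-$d$ part vanishes for $d < b$, so there the chain is already constant, uniformly in $k$. Thus if $m < b$ there is nothing to prove, and otherwise only the finitely many degrees $b \leq d \leq m$ remain. The point of the reduction is that finitely many degrees can be handled with a common index: if for each such $d$ the chain of subgroups $(I_0^\perp)^d \supseteq (I_1^\perp)^d \supseteq \dots$ of $M^d$ stabilizes at some $\ell_d$, then $\ell := \max\{\ell_b, \dots, \ell_m\}$ witnesses the condition demanded by Corollary \ref{simple cor}. The bounded-above case is dual: if $M^d = 0$ for $d > B$, then for each $m$ only the finitely many degrees $m \leq d \leq B$ survive, and one again takes a maximum over finitely many stabilization indices.

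The real content---and the step I expect to be the main obstacle---is the per-degree claim that, for a fixed $d$, the descending chain $(I_k^\perp)^d$ terminates. I do not believe this follows from gr-injectivity and boundedness alone, since a descending chain of subgroups of a single infinite-dimensional $M^d$ need not stabilize. What makes it work in the motivating setting is a finiteness property of $R$: over the Steenrod algebra, and for $M$ free, the computation of $(I_k^\perp)^d$ decomposes componentwise into annihilators inside the \emph{finite-dimensional} graded pieces $A^j$ of $A$, so each such chain is a descending chain of subspaces of a finite-dimensional space and hence stabilizes. I would accordingly expect the clean general hypothesis to be some local finiteness of $R$ (enjoyed by $A$), and I would be cautious about the statement without it: over a graded $k$-algebra with $R^0 = k$ a field and an infinite-dimensional degree-one part, a suspension of the graded dual $R^\vee = \mathrm{Hom}_k(R,k)$ is bounded and gr-injective, yet admits an ascending chain of homogeneous left ideals whose annihilators strictly decrease in a fixed degree---so by Corollary \ref{simple cor} its coproduct $\coprod_s M$ would not be gr-injective.
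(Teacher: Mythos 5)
The paper states this corollary with no proof at all, so there is no argument of the author's to compare yours against; the reduction you describe is surely the intended one. You are right that, by Corollary \ref{simple cor}, both claims amount to eventual constancy of the chains $(I_0^{\perp})^d\supseteq (I_1^{\perp})^d\supseteq\dots$ uniformly over all degrees $d$ in a half-line; that boundedness of $M$ forces $(I_k^{\perp})^d=0$ outside finitely many of those degrees; and that a common stabilization index can then be taken as the maximum of the finitely many per-degree indices. All of that is correct, and it is also correct for the bounded-above case by the symmetric argument.

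Your worry about the one remaining step --- stabilization of $(I_k^{\perp})^d$ in a single fixed degree $d$ --- is justified, and the counterexample you sketch genuinely works, so the corollary as literally stated (for an arbitrary graded ring $R$) is false. Concretely: let $k$ be a field, let $V$ be an infinite-dimensional $k$-vector space with basis $x_1,x_2,\dots$ placed in degree $1$, and let $R=k\oplus V$ with $V\cdot V=0$. The graded dual $M$ with $M^d=\mathrm{Hom}_k(R^{-d},k)$ is concentrated in degrees $-1$ and $0$, hence bounded below and above, and it is gr-injective because $\mathrm{Hom}_{\mathrm{gr}\operatorname{-}R}(-,M)\cong\mathrm{Hom}_k((-)^0,k)$ is exact. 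Yet for the homogeneous left ideals $I_n=(x_1,\dots,x_n)$ the subgroups $(I_n^{\perp})^{-1}=\{f\in V^{\vee}:f(x_1)=\dots=f(x_n)=0\}$ strictly decrease forever, so by Corollary \ref{simple cor} even strict $\Sigma$-injectivity fails. (Directly: the map $\Sigma^{-1}V\to\coprod_{n\in\mathbb{N}}M$ sending $x_i$ to the dual of $1$ in the $i$th summand extends over $\Sigma^{-1}R$ only to the element $(f_n)_n$ of the product with $f_n(x_i)=\delta_{ni}$, all of whose components are nonzero, so no extension lands in the coproduct.) The statement is rescued by exactly the finiteness you point to: if in each degree $d$ the subgroups of $M^d$ of the form $(J^{\perp})^d$ satisfy the descending chain condition --- for instance if $M$ is degreewise finite-dimensional over a central ground field, as holds for $A$ and for $dA$ over the Steenrod algebra --- then each of the finitely many surviving degrees stabilizes and your maximum-of-indices argument completes the proof. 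Some such hypothesis should be added to the corollary; it costs nothing in the applications made of it in Section \ref{Injective graded...}, and the author's own remark that this is a feature of ``gr-self-injective rings'' suggests an unstated hypothesis was intended, though gr-self-injectivity of $R$ by itself does not obviously supply the needed chain condition either.
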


\section{Injective graded $A^*$-comodules and injective graded $A$-modules.}
\label{Injective graded...}

One special case of Corollary \ref{simpler cor} is the theorem of Margolis (see sections 13.2 and 13.3 of \cite{MR738973}): a uniformly bounded-below direct sum of bounded-below gr-injective modules over a $P$-algebra (such as the Steenrod algebra) is gr-injective. Corollary \ref{simpler cor} establishes that this is in fact a general feature of gr-self-injective rings, and does not require the ground ring to be a $P$-algebra.

Another special case of interest is the linear dual of the previous case. Let $A$ be the mod $p$ Steenrod algebra, and let $dA$ denote the $\mathbb{F}_p$-linear dual graded $A$-module of $A$, regarded as a graded left $A$-module via the adjoint action\footnote{This is a standard construction; see e.g. section 11.3 of \cite{MR738973}, or see the discussion of the adjoint action in this paper, belo.w} of $A$. It is easy to show that $dA$ is gr-injective: this was Margolis's example of a demonstrably non-free gr-injective $A$-module, as in Proposition 12 in section 11.3 of \cite{MR738973}. We can now address the question of whether direct sums of copies of $dA$ are also gr-injective:
\begin{prop}\label{da and sigma-inj}
The graded $A$-module $dA$ is bounded-abovely $\Sigma$-injective, but $dA$ is not bounded-belowly $\Sigma$-injective.
\end{prop}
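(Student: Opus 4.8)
The plan is to apply Corollary \ref{simpler cor} to the two halves separately, so the real content reduces to understanding the boundedness properties of $dA$ as a graded $A$-module. First I would recall the grading: since $A$ is nonnegatively graded and bounded below (concentrated in degrees $\geq 0$), its $\mathbb{F}_p$-linear dual $dA$ is concentrated in degrees $\leq 0$, i.e. $dA$ is \emph{bounded above}. Given that $dA$ is gr-injective (Margolis's Proposition 12, which we assume), the first half is immediate: Corollary \ref{simpler cor} says every bounded-above gr-injective module is bounded-abovely $\Sigma$-injective, so $dA$ is bounded-abovely $\Sigma$-injective with no further work.

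The harder half is showing that $dA$ is \emph{not} bounded-belowly $\Sigma$-injective. The natural route is the criterion in Corollary \ref{simple cor}: $dA$ fails to be bounded-belowly $\Sigma$-injective precisely when there is some integer $m$ and some ascending chain of homogeneous left ideals $I_0 \subseteq I_1 \subseteq \dots$ of $A$ whose annihilator chain $I_0^{\perp} \supseteq I_1^{\perp} \supseteq \dots$ in $dA$ fails to be eventually constant in some degree $d \leq m$. So I would exhibit an explicit such chain. The strategy is to produce a strictly increasing chain of homogeneous left ideals whose annihilators in $dA$ genuinely keep shrinking in arbitrarily negative degrees. A clean way to manufacture this is to use the Milnor basis: there are elements of $A$ in arbitrarily high positive degree (for instance, at $p=2$, the Milnor primitives $Q_n$ or the duals $\xi_i$ of increasing degree), and by taking an ascending chain of left ideals generated by progressively larger finite sets of such elements, the dual constraints they impose cut down the annihilator submodule of $dA$ in correspondingly deep negative degrees.

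The key structural input is the description of the annihilator $X^{\perp} \subseteq dA$ under the adjoint action: for a set $X$ of homogeneous elements of $A$, the submodule $X^{\perp}$ consists of those functionals on $A$ killed by the adjoint action of every element of $X$. Using the explicit dual-basis pairing between $A$ and $dA$, I would compute these annihilators concretely enough to verify that, as we pass up the chain of ideals, the annihilator strictly decreases in infinitely many negative degrees. Because each new generator lives in a higher degree, its constraint first bites in a more negative degree of $dA$, so no single stage $\ell$ of the chain can make the annihilator chain constant in \emph{all} degrees $\leq m$; it only stabilizes degree by degree. By the unbounded-below version of Corollary \ref{simple cor}, this exhibits the failure of bounded-belowly $\Sigma$-injectivity.

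The main obstacle I anticipate is the explicit computation verifying that the annihilator chain really does fail to stabilize below any fixed bound, rather than merely suspecting it from the non-Noetherianity of $A$. One must choose the chain of ideals carefully so that the induced constraints on $dA$ are both nontrivial and spread out into arbitrarily negative degrees; a poor choice might stabilize prematurely in each fixed degree and thereby satisfy the criterion. I would guard against this by pinning down, via the Milnor-basis pairing, exactly which dual-basis elements each ideal annihilates, and confirming that the ``frontier'' of annihilated elements marches off to $-\infty$ as the chain ascends. It may also be conceptually cleaner to phrase the whole argument as the linear dual of the bounded-below case from Theorem \ref{observation 1}: the statement that an unbounded-below direct sum of suspensions of $A$ fails to be gr-injective dualizes to a corresponding failure for $dA$, and tracking that duality carefully is where the care concerning gradings will be needed.
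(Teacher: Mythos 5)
Your first half coincides exactly with the paper's: $dA$ is concentrated in nonpositive degrees, so Corollary \ref{simpler cor} gives bounded-above $\Sigma$-injectivity immediately. For the second half you have the right framework (exhibit an ascending chain of homogeneous left ideals whose annihilator chain in $dA$ fails to stabilize in all degrees $\leq m$, then invoke Corollary \ref{simple cor}), but the decisive step is exactly the one you defer: you propose a chain of ideals generated by Milnor primitives or similar elements of increasing degree, and then say you ``would compute these annihilators concretely enough'' to see that they keep shrinking in arbitrarily negative degrees. The heuristic that each new generator ``first bites in a more negative degree'' is not a proof --- a new generator in higher degree could impose constraints already implied by the earlier ones in all sufficiently negative degrees, which is precisely the premature-stabilization worry you yourself raise --- and the Milnor-basis bookkeeping needed to rule this out directly is the entire content of the claim. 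As written, the proposal identifies the gap but does not close it.

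The paper closes it by choosing the chain so that the verification follows from structural theorems rather than explicit computation: take $I_n = A(\Sq^1,\Sq^2,\dots,\Sq^{2^n})$, so that $I_n^{\perp}\subseteq dA$ is determined by the $A(n)$-module structure of $dA$, where $A(n)$ is the finite sub-Hopf-algebra generated by $\Sq^1,\dots,\Sq^{2^n}$. The $P$-algebra property of $A$ (Theorem 12 in section 13.3 of \cite{MR738973}) says $A$ is free over $A(n)$ on homogeneous generators bounded below by $0$ but unbounded above; since $A(n)$ is Frobenius, $dA$ is then free over $A(n)$ on generators bounded above by $0$ but unbounded below. Freeness gives complete control of the annihilators: the chain \eqref{seq 8857435} stabilizes in all degrees above any fixed $m$ but in no cofinite set of degrees below $m$. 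If you want to complete your version with the $Q_i$, you would end up importing essentially the same input (freeness of $dA$ over the relevant finite subalgebras), so you should make that dependence explicit rather than attempting a bare dual-basis calculation.
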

\begin{proof}
Since $dA$ is bounded above, it is a special case of Corollary \ref{simpler cor} that $dA$ is bounded-abovely $\Sigma$-injective. So all that remains is to show that $dA$ is not bounded-belowly $\Sigma$-injective.

Consider the ascending chain of homogeneous left ideals
\[ A(\Sq^1) \subseteq A(\Sq^1,\Sq^2)\subseteq A(\Sq^1,\Sq^2,\Sq^4) \subseteq A(\Sq^1,\Sq^2,\Sq^4,\Sq^8) \subseteq 
 \dots \]
of $A$. (Here we work with the mod $2$ Steenrod algebra for convenience of exposition, but an analogous argument works at odd primes.) 

We consider the graded annihilator submodules of $dA$ for each of these ideals. The annihilator submodule $(A(\Sq^1,\Sq^2,\dots ,\Sq^{2^n}))^{\perp}\subseteq dA$ depends only on the action of $\Sq^1,\Sq^2,\dots,\Sq^{2^n}$ on $dA$, i.e., the structure of $dA$ as a graded module over the subalgebra $A(n)$ of $A$ generated by $\Sq^1,\Sq^2,\dots,\Sq^{2^n}$. It follows from generalities about $P$-algebras (see Theorem 12 in section 13.3 of \cite{MR738973}) that, for each $n$, the underlying $A(n)$-module of $A$ is free on a set of homogeneous generators whose degrees are bounded below by zero, but are not bounded above. 

Since $A(n)$ is a Frobenius algebra, the $\mathbb{F}_2$-linear dual $dA$ of $A$ is also free as an $A(n)$-module, but it is free on a set of homogeneous generators whose degrees are bounded above by zero, but are not bounded below. The upshot is that, for each integer $m$, there exists an integer $\ell$ such that the sequence 
\begin{equation}\label{seq 8857435} 
\left(A(\Sq^1)\right)^{\perp} \supseteq \left(A(\Sq^1,\Sq^2)\right)^{\perp}\supseteq \left(A(\Sq^1,\Sq^2,\Sq^4)\right)^{\perp} \supseteq \left(A(\Sq^1,\Sq^2,\Sq^4,\Sq^8)\right)^{\perp} \supseteq  \dots
\end{equation}
is constant after its $\ell$th stage in all degrees {\em greater than $m$}. However, there is no integer $\ell$ such that \eqref{seq 8857435} is constant after its $\ell$th stage in all degrees {\em less than $m$.}

Consequently, by Corollary \ref{simple cor}, $dA$ is not bounded-belowly $\Sigma$-injective.
\end{proof}

There is a good reason to consider the various forms of graded $\Sigma$-injectivity for the specific $A$-module $dA$. Recall that the category $\gr\Comod(A_*)$ of graded right comodules over the dual Steenrod algebra $A_*$ admits a {\em covariant} embedding into the category $\gr\Mod(A)$ of graded left $A$-modules. This is a purely algebraic construction, quite classical: I do not know its earliest appearance, but as far as I know, \cite{MR686116} was its first mention in the context of algebraic topology. 
Given a graded right $A_*$-comodule $M$ with coaction map $\psi: M \rightarrow M\otimes_{\mathbb{F}_p} A_*$, identify $A$ with its double dual $A_{**}$, and then the action map 
\begin{align*}
 A\times M \stackrel{\cong}{\longrightarrow} A_{**}\times M  \rightarrow M \end{align*} 
sends a pair $(f,m)\in A_{**}\times M$ to the image of $m$ under the composite
\begin{equation}\label{composite 035494} M\stackrel{\psi}{\longrightarrow} M\otimes_{\mathbb{F}_p} A_* \stackrel{M\otimes f}{\longrightarrow} M\otimes_{\mathbb{F}_p}\mathbb{F}_p \stackrel{\cong}{\longrightarrow} M. \end{equation}
This action of $A$ on $M$ is called the {\em adjoint action}. 
This construction yields a covariant, exact, faithful, full functor\footnote{Our presentation of the functor $\iota$ and its properties presumes that we are using cohomological gradings throughout, so that $A$ is in nonnegative degrees and $A_*$ is in nonpositive degrees. This ensures that the adjoint action indeed respects the gradings.} $\iota: \gr\Comod(A_*)\rightarrow \gr\Mod(A)$. The book \cite{MR2012570} is an excellent reference for these and other properties of the functor $\iota$, not only for the dual Steenrod algebra $A_*$, but also for a general coalgebra in place of $A_*$.

As the covariant embedding $\iota$ of $\gr\Comod(A_*)$ into $\gr\Mod(A)$ is exact, one expects $\iota$ to preserve a great deal of cohomological information. If $\iota$ were to send injective objects to injective objects, then it would be easy to prove that right-derived functors in the category of graded $A_*$-comodules can be computed by first embedding the comodule category into graded $A$-modules, then calculating right derived functors there. Right derived functors in the comodule category arise in practice in algebraic topology\footnote{E.g. the input term of the $H\mathbb{F}_p$-Adams spectral sequence for a non-finite-type spectrum, which generally only has a description in terms of $\Cotor_{A_*}$ rather than $\Ext_A$; see chapter 2 of \cite{MR860042}. Also, the input term of the Sadofsky spectral sequence \cite{sadofsky2001homology}, which is comprised of the right derived functors of product in the category of graded $A_*$-comodules.}, but module categories are far more familiar and understood than comodule categories, it is of obvious interest to know whether homological algebra in graded $A_*$-comodules can be described in terms of homological algebra in graded $A$-modules! Yet it seems this question---of whether $\iota$ sends gr-injective comodules to gr-injective modules---is not addressed in the literature. As far as the author knows, an answer is not known.

It turns out that the graded $\Sigma$-injectivity of $dA$ is precisely the key to answering that question:
\begin{theorem}\label{failure of iota to preserve gr-injs}
Let $A$ be the mod $p$ Steenrod algebra for any prime $p$. Then the covariant embedding $\iota$ of graded $A_*$-comodules into graded $A$-modules does {\em not} send gr-injectives to gr-injectives. 

However, if $M$ is a {\em bounded above} graded $A_*$-comodule, then $\iota(M)$ {\em is} gr-injective.
\end{theorem}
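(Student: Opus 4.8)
The plan is to deduce both halves of the theorem from the graded $\Sigma$-injectivity of the single module $dA$, established in Proposition \ref{da and sigma-inj}, via the identification $dA = \iota(A_*)$, where $A_*$ is the dual Steenrod algebra regarded as a right comodule over itself through its comultiplication (so that the adjoint action \eqref{composite 035494} recovers the standard $A$-module structure on the dual of $A$). The functor $\iota$ is additive and exact, and since the adjoint action is defined elementwise it preserves suspensions and arbitrary direct sums: for any set $S$ and any $d\colon S\to\mathbb{Z}$ one has $\iota\!\left(\coprod_{s\in S}\Sigma^{d(s)}A_*\right)\cong\coprod_{s\in S}\Sigma^{d(s)}dA$.

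For the first (negative) assertion I would use that cofree comodules are gr-injective: for any graded $\mathbb{F}_p$-vector space $V$, the comodule $V\otimes_{\mathbb{F}_p}A_*$ is gr-injective, being the image of the (automatically injective) object $V$ under the cofree functor $(-)\otimes_{\mathbb{F}_p}A_*$, which is right adjoint to the exact forgetful functor $\gr\Comod(A_*)\to\mathbb{F}_p\text{-}\mathrm{Vect}$. In particular $\coprod_{s\in S}\Sigma^{d(s)}A_*\cong\big(\coprod_{s\in S}\Sigma^{d(s)}\mathbb{F}_p\big)\otimes_{\mathbb{F}_p}A_*$ is gr-injective for \emph{every} choice of $S$ and $d$. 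If $\iota$ preserved gr-injectivity, then $\iota$ of this comodule, namely $\coprod_{s\in S}\Sigma^{d(s)}dA$, would be gr-injective for all $S$ and $d$; but that is exactly the statement that $dA$ is unboundedly $\Sigma$-injective, hence in particular bounded-belowly $\Sigma$-injective, contradicting Proposition \ref{da and sigma-inj}. Thus $\iota$ does not send gr-injectives to gr-injectives.

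For the second (positive) assertion, let $M$ be a bounded-above gr-injective comodule (this being the sense in which $\iota$ preserves injectivity of bounded-above objects). The coaction $\psi\colon M\to M\otimes_{\mathbb{F}_p}A_*$ is a comodule monomorphism into a cofree, hence gr-injective, comodule, so by gr-injectivity of $M$ it is split, exhibiting $M$ as a retract of $M\otimes_{\mathbb{F}_p}A_*$ in $\gr\Comod(A_*)$. Choosing a homogeneous basis $\{e_i\}$ of $M$ identifies $M\otimes_{\mathbb{F}_p}A_*\cong\coprod_i\Sigma^{|e_i|}A_*$, and because $M$ is bounded above the degrees $|e_i|$ admit an upper bound. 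Applying $\iota$ yields $\iota(M\otimes_{\mathbb{F}_p}A_*)\cong\coprod_i\Sigma^{|e_i|}dA$, a direct sum of suspensions of $dA$ whose suspension degrees are bounded above; by the bounded-abovely $\Sigma$-injectivity of $dA$ from Proposition \ref{da and sigma-inj}, this module is gr-injective. Since $\iota$ is a functor it carries the splitting $r\psi=\mathrm{id}_M$ to $\iota(r)\iota(\psi)=\mathrm{id}_{\iota(M)}$, exhibiting $\iota(M)$ as a direct summand of the gr-injective module $\iota(M\otimes_{\mathbb{F}_p}A_*)$; a direct summand of a gr-injective module is gr-injective, so $\iota(M)$ is gr-injective.

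The routine points—exactness of $\iota$, its commutation with coproducts and suspensions, preservation of retracts, and the fact that summands of gr-injectives are gr-injective—are standard. The main obstacle is the bookkeeping that matches the \emph{direction} of boundedness to the \emph{direction} of $\Sigma$-injectivity available for $dA$: because $A_*$ lives in nonpositive degrees, the cofree comodule on a bounded-above vector space is a coproduct of suspensions of $dA$ with suspension degrees bounded \emph{above}, which is precisely the regime in which $dA$ is $\Sigma$-injective. Had $M$ instead been bounded below, the identical construction would produce suspension degrees with no upper bound, landing in the regime where Proposition \ref{da and sigma-inj} guarantees failure. Keeping this asymmetry straight, and confirming that the splitting of $\psi$ genuinely survives the application of $\iota$, is where the care lies.
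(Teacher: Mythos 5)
Your proposal is correct and follows essentially the same route as the paper: identify the gr-injective $A_*$-comodules as retracts of coproducts of suspensions of $A_*$, note that $\iota$ sends $A_*$ to $dA$ and commutes with coproducts and retracts, and then invoke Proposition \ref{da and sigma-inj} to get gr-injectivity exactly when the suspension degrees are bounded above. Your version simply spells out the details (the splitting of the coaction map, the degree bookkeeping) that the paper leaves implicit, and you correctly read the intended hypothesis that $M$ be a bounded-above \emph{gr-injective} comodule.
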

\begin{proof}
The gr-injective right $A_*$-comodules are the retracts of the extended graded right $A_*$-comodules, i.e., those of the form $V\otimes_{\mathbb{F}_p}A_*$ for a graded $\mathbb{F}_p$-vector space $V$. Equivalently, the gr-injective right $A_*$-comodules are the summands of coproducts of suspensions of $A_*$. The functor $\iota$ sends $A_*$ to $dA$, and $\iota$ preserves coproducts since it is a left adjoint\footnote{The right adjoint to $\iota$ is called ``the rational functor'' or ``the trace functor'' in the coalgebra literature: see \cite{MR2012570} for a nice treatment.}. Hence Proposition \ref{da and sigma-inj} yields immediately that $\iota(M)$ is gr-injective if $M$ is a bounded-above gr-injective comodule, while $\iota(M)$ must fail to be gr-injective for some non-bounded-above gr-injective comodules.
\end{proof}

To close, here is a related question that the author is curious about, but does not know an answer to:
\begin{question}\label{dim q} 
Let $A$ be the mod $p$ Steenrod algebra for any prime $p$. Suppose that $M$ is a graded right $A_*$-comodule which is gr-injective. 
Is the gr-injective dimension of the $A$-module $\iota(M)$ at most $1$?
\end{question}
While Question \ref{dim q} is motivated by the desire to understand the homological algebra of $A_*$-comodules because of their applications in algebraic topology, the question is also quite close to one that has been considered for other reasons, as follows. Suppose that $R$ is a {\em left Noetherian} nonnegatively-graded ring. A theorem of van der Bergh (\cite{MR1469646}, see also \cite{yekutielivandenberghpreprint} for Yekutieli's elegant proof) establishes that, given a gr-injective left $R$-module $M$, the injective dimension of the underlying ungraded $R$-module of $M$ is at most one. 

Of course the Steenrod algebra is not left Noetherian! But Yekutieli remarks, in \cite{yekutielivandenberghpreprint}, that ``[w]e do not know if the noetherian condition in Theorem 1 is necessary.'' If the theorem of van der Bergh and Yekutieli can be proven without the Noetherian hypothesis, then the injective dimension of a coproduct of copies of $dA$ is at most $1$, and consequently, 
for any gr-injective $A_*$-comodule $M$, the gr-injective dimension of $\iota(M)$ could not exceed $1$, yielding an affirmative answer to Question \ref{dim q}.

\bibliography{/home/asalch/texmf/tex/salch}{}
\bibliographystyle{plain}
\end{document}